\newcommand{\myred}[1]{\textcolor{red}{#1} }
\newtheorem{theorem}{Theorem}[section]
\newtheorem{assumption}[theorem]{Assumption}
\newtheorem{lemma}[theorem]{Lemma}
\newtheorem{proposition}[theorem]{Proposition}
\newcommand{\dif}{\mathrm{d}}
\newcommand{\di}{\mathrm{div}}
\newcommand{\tr}{\mathrm{tr}} 
\begin{document}
\title[Blowup for UCM model ]{Blowup of solutions for compressible viscoelastic fluid}
\author{Na Wang, Sebastien Boyaval and Yuxi Hu}
 \thanks{\noindent Na Wang,   School of Applied Science,
Beijing Information Science and Technology University, Beijing, 100192, P.R. China, wn\_math@126.com\\
\indent Sebastien Boyaval, LHSV, ENPC, Institut Polytechnique de Paris, EDF R\&D, Chatou, France and Inria, France, sebastien.boyaval@enpc.fr\\
\indent   Yuxi Hu, Department of Mathematics, China University of Mining and Technology, Beijing, 100083  P.R. China, yxhu86@163.com 
}
\begin{abstract}
We prove finite-time blowup of classical solutions for the compressible Upper Convective Maxwell (UCM) viscoelastic fluid system. By establishing a key energy identity and adapting Sideris' method for compressible flows, we derive a Riccati-type inequality for a momentum functional. For initial data with compactly supported perturbations satisfying a sufficiently large condition, all classical solutions lose regularity in finite time. This constitutes the first rigorous blowup result for multidimensional compressible viscoelastic fluids.\\
{\bf Keywords}:  UCM model; blowup; large data \\
 {\bf  2010 MSC}:  35B44, 76A10
\end{abstract}
\maketitle

\section{Introduction}

The dynamics of complex fluids, such as compressible viscoelastic fluids, are governed by the fundamental principles of mass and momentum conservation. For these systems, the conservation of momentum is expressed through the Cauchy stress tensor $\boldsymbol{\sigma}$, which can be decomposed into pressure $p$ and extra stress $\mathbf{T}$ contributions, i.e., $\boldsymbol \sigma = -pI + \mathbf{T}$. The mathematical formulation of the governing equations necessitates the specification of both $p$ and $\mathbf{T}$ to close the system of equations. 

In the context of compressible flows for $(t, \mathbf x)\in \mathbb R^+\times \mathbb R^3$, the mass conservation equation takes the form:
\begin{equation}\label{1.1}
    \frac{D\rho}{Dt} + \rho \nabla \cdot \mathbf{u} = 0,
\end{equation}
where $\rho$ denotes the density field and $\mathbf{u}$ represents the velocity field.  $D/Dt=\partial_t+\mathbf{u}\cdot \nabla$ denotes the material derivative. Meanwhile, the conservation of momentum is given by:
\begin{equation}\label{1.2}
    \rho \frac{D\mathbf{u}}{Dt} = \nabla \cdot \boldsymbol \sigma.
\end{equation}

To address the complexities introduced by fluid compressibility, it is crucial to adopt appropriate constitutive models that accurately describe the behavior of the extra stress $\mathbf{T}$. Unlike the incompressible viscoelastic fluid (see \cite{GuSa90, CM01}) where  $\mathbf T$ satisfy 
\begin{align}\label{in}
   \lambda   \overset{\triangledown}{\mathbf{T}}   + \mathbf{T} =2 \mu_0  \mathbf{D u}  ,
\end{align}
 the rheological equation of state  for $\mathbf T$ should incorporate compressibility effects \cite{BP12}. 
Here, $\mu_0$ is the kinematic viscosity, $\lambda$ is the relaxation time, and $\mathbf{D u}$ is the rate-of-deformation tensor defined as $\mathbf{D u} = \frac{1}{2}(\nabla \mathbf{u} + (\nabla \mathbf{u})^T)$. $\overset{\triangledown}{\mathbf{T}}$ denotes the upper convective derivative (see Oldroyd  \cite{OL50}) , defined by
\begin{equation*}
\overset{\triangledown}{\mathbf{T}} :=\frac{D \mathbf{T}}{Dt}-\nabla \mathbf{u} \cdot \mathbf{T} - \mathbf{T} \cdot (\nabla \mathbf{u})^T.
\end{equation*}
Bollada and Phillips \cite{BP12}, see also  \cite{BE94, EB90}, consider a special Upper Convected Maxwell (UCM) model for a compressible viscoelastic fluid where $\mathbf{T}$ is given by 
\begin{equation}\label{1.3}
    \lambda \left( \overset{\triangledown}{\mathbf{T}}  + (\nabla \cdot \mathbf{u})\mathbf{T} \right) + \mathbf{T} = \mu \left( 2\mathbf{D u} + \left[ \rho \frac{\dif \mu}{\dif \rho} - \mu \right] (\nabla \cdot \mathbf{u})I \right),
\end{equation}
where $\mu(\rho)$ as a function of $\rho$ denotes the dynamic viscosity.
The appearance of additional term $ (\nabla \cdot \mathbf u) \mathbf T$ in \eqref{1.3} is due to the compressible effect, which is originally coming for Truesdell \cite{TR69}. Furthermore, by using the Helmholtz free density given in Beris and Edwards \cite{BE94},  the pressure is supposed to be:
\begin{equation}\label{1.4}
    p = p_0(\rho) + \hat{p}(\rho, \mathbf{T}),
\end{equation}
for an isothermal fluid, where $p_0(\rho)$ represents some equation of state, and $\hat{p}(\rho, \mathbf{T})$ accounts for additional contributions involving the conformation tensor $\mathbf{C}$, related to stress $\mathbf{T}$ via $\mathbf{T} = G(\rho \mathbf{C} - I)$, defined by
\[
\hat p(\rho, T)=\frac{1}{2} \frac{ \partial (\mu/ \rho)}{\partial (1/\rho)} (\mathrm{tr} (\mathbf C)-\ln |\mathbf C|).
\]

In this paper, we assume the dynamic viscosity $\mu$ is proportional to density as $\mu=\mu_0 \rho$, or, equivalently, for constant kinematic viscosity $\mu_0$. Therefore, the system \eqref{1.1}-\eqref{1.4} turn into
\begin{align}\label{1}
\begin{cases}
\rho_t + \nabla \cdot (\rho \mathbf{u}) = 0, \\
\rho \mathbf{u}_t + \rho \mathbf{u} \cdot \nabla \mathbf{u} + \nabla p_0 = \di \mathbf{T}, \\
\lambda \left( \mathbf{T}_t + \mathbf{u} \cdot \nabla \mathbf{T} - \nabla \mathbf{u} \cdot \mathbf{T} - \mathbf{T} \cdot (\nabla \mathbf{u})^T + (\nabla \cdot \mathbf{u}) \mathbf{T} \right) + \mathbf{T} = 2\mu_0 \rho \mathbf{D} \mathbf{u},
\end{cases}
\end{align}
where $p_0(\rho)$ is assumed to satisfy the usual $\gamma$-law, $p_0(\rho)=a \rho^\gamma$ with $a>0$ a constant and $\gamma>1$. 

Let $\mathbf{F}$ be the deformation gradient (see \cite{LLZ05}). Then, by chain rule, $\mathbf F$ satisfies
\begin{align}\label{2}
\mathbf{F}_t + \mathbf{u} \cdot \nabla \mathbf{F} = \nabla \mathbf{u} \cdot \mathbf{F}.
\end{align}
To establish the well-posedness of system \eqref{1},  Boyaval \cite{BY21} introduces a symmetric positive definite matrix $\mathbf A$ via $\mathbf F$, defined by
\begin{equation}\label{1.5}
\mathbf{A} = \mathbf{F}^{-1} \left( \frac{1}{\rho G} \mathbf{T} + \mathbf {I_3} \right) \mathbf{F}^{-T},
\end{equation}
i.e., $\mathbf{T} = \rho G (\mathbf{F} \mathbf{A} \mathbf{F}^T - \mathbf {I_3})$, where $G = \frac{\mu_0}{\lambda}$ is the shear modulus constant. 
It is not difficult to see the matrix $\mathbf{A}$ satisfies the equation
\begin{align}\label{3}
\lambda (\mathbf{A}_t + \mathbf{u} \cdot \nabla \mathbf{A}) + \mathbf{A} = \mathbf{F}^{-1} \mathbf{F}^{-T}.
\end{align}
Therefore, in the sense of classical solutions, system \eqref{1} is equivalent to system 
\begin{align}\label{4}
\begin{cases}
\rho_t + \nabla \cdot (\rho \mathbf{u}) = 0, \\
\rho \mathbf{u}_t + \rho \mathbf{u} \cdot \nabla \mathbf{u} + \nabla p = \nabla \cdot (\rho G(\mathbf{F} \mathbf{A} \mathbf{F}^T - \mathbf {I_3})), \\
\lambda (\mathbf{A}_t + \mathbf{u} \cdot \nabla \mathbf{A}) + \mathbf{A} = \mathbf{F}^{-1} \mathbf{F}^{-T}, \\
\mathbf{F}_t + \mathbf{u} \cdot \nabla \mathbf{F} = \nabla \mathbf{u} \cdot \mathbf{F}
\end{cases}
\end{align}
on noting $\mathbf{T, F, A, \rho}$ are connected by \eqref{1.5} and $p=p_0$ here, since $\partial_\rho\mu_0=0$. 
The local well-posedness of the Cauchy problem for system \eqref{4} with smooth initial data
\begin{equation}\label{1.6}
(\rho, \mathbf{u}, \mathbf{A}, \mathbf{F})(0,x) = (\rho_0, \mathbf{u}_0, \mathbf{A}_0, \mathbf{F}_0)
\end{equation}
is established 
when $\rho_0>0$ and $\rho_0\; {\rm det}(\mathbf{F}_0)$ is a positive constant 
by 
exploiting the system's structure and constructing a convex entropy in \cite{BY21}. 
\myred{The global well-posedness of the Cauchy problem for system \eqref{4} with small initial data was establised in \cite{NBH24}.}
We note that Pan \cite{PA24} established global well-posedness in analytic function spaces for system \eqref{1.1}-\eqref{1.2} coupled with \eqref{in}, under small initial data conditions.

The aim of this paper is to prove that any classical solution of system \eqref{4}--\eqref{1.6} must blow up in finite time, provided the initial data are sufficiently large in an appropriate sense. To our knowledge, besides the one dimensional case \cite{HuRaWa022, HuRa024}, this is the first result concerning blow-up of solutions for the multi-dimensional compressible viscoelastic fluid system. We derive a useful energy identity for system \eqref{4} (or equivalently \eqref{1}; see \eqref{5} below). Utilizing this identity and blow-up methods adapted from Sideris \cite{S85} for compressible Euler equations, we establish a Riccati-type ordinary differential equation and demonstrate the formation of singularities using some technical arguments.

The paper is organized as follows. In Section 2, we  state the assumptions on initial data and present the main results. Section 3 provides a concise and elegant proof of the key theorem.

\section{Statement of the main result}
Before stating our main result,  we first make some assumptions on initial data.
\begin{assumption}\label{assongnull}
1. The initial data $(\rho_0 - \bar \rho, \mathbf{u}_0, \mathbf{A}_0 - \mathbf {I_3}, \mathbf{F}_0 - \mathbf {I_3})$ are compactly supported in 
$B_R := \{\mathbf x\in \mathbb{R}^3 \mid |\mathbf x|\le R\}$ for some $R>0$, where $\bar \rho$ is any positive constant. Without loss of generality, let $\bar \rho = 1$. \\

2. Condition for $\rho_0$:
\begin{align} \label{ass1}
m_0 := \int_{\mathbb{R}^3} (\rho_0 - \bar \rho)\dif \mathbf x \ge 0.
\end{align}

3. Condition for $\mathbf T_0:=\rho_0 G( \mathbf F_0 \mathbf A_0 \mathbf F_0^T-\mathbf {I_3})$:
\begin{align} \label{ass2}
\mathrm{tr} (\mathbf T_0)\ge 0.
\end{align}
\end{assumption}

On the other hand, since \eqref{4} is a hyperbolic system, we have finite propagation speed property, see \cite{Ra015, S85}. 
\begin{proposition}\label{prop1}
Let Assumption \ref{assongnull} hold and suppose that $(\rho, \mathbf{u}, \mathbf{A}, \mathbf{F})$ are $C^1$ solutions to the system \eqref{4} on $[0,T]$. Then, there exists a constant $\sigma$ such that
\[
(\rho(\cdot,t) - \bar \rho, \mathbf{u}(\cdot,t), \mathbf{A}(\cdot,t) - \mathbf {I_3}, \mathbf{F}(\cdot,t) - \mathbf {I_3}) = (0,0,0,0)
\]
on $D(t) = \{\mathbf x\in \mathbb{R}^3 \mid |\mathbf x| > R + \sigma t\}$ for $0 \le t \le T$.
\end{proposition}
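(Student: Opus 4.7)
The plan is to deduce the statement from the symmetric hyperbolic character of system \eqref{4} established in \cite{BY21}, via the classical domain-of-dependence argument. First, I would verify that the constant state $U_\star := (\bar\rho, \mathbf 0, \mathbf{I_3}, \mathbf{I_3})$ is itself a smooth solution of \eqref{4}: indeed $\mathbf F \mathbf A \mathbf F^T - \mathbf{I_3}$ and $\mathbf F^{-1} \mathbf F^{-T} - \mathbf{A}$ both vanish there, $\nabla p_0(\bar\rho) = 0$, and all time derivatives are trivially zero. By Assumption \ref{assongnull} the perturbation $V := U - U_\star$ vanishes on $\{|\mathbf x|>R\}$ at $t=0$, so the problem reduces to showing $V \equiv 0$ on $D(t)$ for every $0 \le t \le T$.

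Next I would invoke the symmetric hyperbolic structure of \eqref{4}: the convex entropy constructed in \cite{BY21} provides a smooth, positive-definite Friedrichs symmetrizer $S(U)$ for the quasilinear first-order system $U_t + \sum_{j=1}^3 A_j(U) \partial_{x_j} U = B(U)$, with $B(U_\star)=0$. Because $U \in C^1([0,T]\times\mathbb R^3)$ is uniformly bounded, I can choose $\sigma>0$ so as to be a uniform upper bound on the characteristic speeds of this quasilinear system, that is, on the spectral radius of $\sum_j \xi_j A_j(U(t,\mathbf x))$ in the inner product induced by $S(U(t,\mathbf x))$, for all $(t,\mathbf x)\in[0,T]\times\mathbb R^3$ and unit $\xi$.

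With $\sigma$ fixed, the cone argument proceeds in standard fashion. Fix $(t_0,\mathbf x_0)$ with $|\mathbf x_0|>R+\sigma t_0$ and set $K := \{(t,\mathbf x) : 0 \le t \le t_0,\; |\mathbf x - \mathbf x_0| \le \sigma(t_0 - t)\}$, $K_s := K \cap \{t=s\}$. The difference $V$ satisfies $V_t + \sum_j A_j(U)\partial_{x_j} V = \tilde B(U,V)$, where $\tilde B(U,V)$ is smooth with $\tilde B(U,0)=0$, hence locally $|\tilde B(U,V)|\lesssim |V|$. Multiplying by $S(U)V$ and integrating over $K \cap \{0 \le t \le s\}$, the divergence theorem turns the spatial derivatives into a surface integral on $\partial K$; the choice of $\sigma$ makes the lateral boundary quadratic form $n_t S(U) + \sum_j n_{x_j} S(U) A_j(U)$ nonnegative (the classical hyperbolic causality condition), and the bottom slice $K_0$ contributes nothing by Assumption \ref{assongnull}. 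The resulting differential inequality $\frac{d}{ds}\int_{K_s} (S(U)V)\cdot V \,\dif \mathbf x \le C \int_{K_s} (S(U)V)\cdot V \,\dif \mathbf x$ together with Gr\"onwall forces $V\equiv 0$ on $K$, and in particular $V(t_0,\mathbf x_0)=0$. Since $(t_0,\mathbf x_0)$ is arbitrary in $D(t_0)$, the proposition follows.

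The main technical point I expect to confront is the verification of the positivity of the lateral quadratic form, i.e., checking that $\sigma$ indeed dominates the $S$-symmetrized characteristic speeds uniformly on $[0,T]\times\mathbb R^3$. This is automatic from the choice of $\sigma$ once $S(U)$ from \cite{BY21} is available, but the explicit form of $S(U)$ for the full coupled $(\rho,\mathbf u,\mathbf A,\mathbf F)$ system is unwieldy, so I would rely on the existence and positive-definiteness of $S$ near $U_\star$ rather than on its explicit expression. All other ingredients -- the equation for $V$ with bounded-by-$|V|$ right-hand side, the divergence-theorem estimate on the cone, and Gr\"onwall -- are standard for symmetric hyperbolic systems.
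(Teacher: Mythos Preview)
The paper does not supply its own proof of this proposition; it simply asserts the finite propagation speed property as a consequence of the hyperbolic character of system \eqref{4} and defers to the references \cite{Ra015, S85}. Your proposal spells out exactly the standard domain-of-dependence energy argument that those references contain, adapted to the present system via the symmetrizer coming from the convex entropy of \cite{BY21}; this is the expected route and is correct in outline.

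One small remark: you assume the $C^1$ solution is uniformly bounded on $[0,T]\times\mathbb R^3$ in order to choose a single $\sigma$ dominating all characteristic speeds. Strictly speaking, $C^1$ alone does not give a uniform $L^\infty$ bound on unbounded domains, and you cannot yet invoke the compact support of the perturbation at positive times (that is what you are trying to prove). The usual fix---and what the references implicitly do---is either to assume from the outset that the classical solution lies in $C([0,T];H^s)$ with $s>5/2$ (which is how local existence in \cite{BY21} is stated, so uniform bounds come from Sobolev embedding), or to run the cone argument locally and use continuity of $U$ to bound the speeds on each backward cone separately. Either resolution is routine, but it is worth flagging so that the choice of $\sigma$ is not circular.
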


Now, we define some useful averaged quantities as follows:
\begin{align}
m(t) &:= \int_{\mathbb{R}^3} (\rho(\mathbf x,t) - \bar \rho)\dif \mathbf x, \\
W(t) &:= \int_{\mathbb{R}^3} \mathbf x \cdot (\rho \mathbf{u})(\mathbf x,t) \dif \mathbf x.
\end{align}

\begin{theorem}\label{th}
Suppose that $(\rho, \mathbf{u}, \mathbf{A}, \mathbf{F})$ are any $C^1$ solution of \eqref{4}--\eqref{1.6}. Then there exists initial data satisfying Assumption \ref{assongnull} and 
\begin{align}\label{11}
W(0) \ge \lambda \left( H_0 + \max \rho_0 \|\mathbf{u}_0\|_{L^2}^2 \right) + \frac{16 R^4 \sigma \pi \max \rho_0}{3},
\end{align}
where $H_0$ is defined in \eqref{h0}, such that the life span $T$ of the $C^1$ solution is finite.
\end{theorem}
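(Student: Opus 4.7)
The plan is to adapt Sideris' virial method from \cite{S85} to the UCM system by studying the momentum functional $W(t)$, using the energy identity mentioned in the introduction to control the extra stress contribution, and deriving a Riccati-type differential inequality that is inconsistent with global $C^1$ regularity once \eqref{11} holds. Throughout, Proposition \ref{prop1} ensures that $(\rho-\bar\rho, \mathbf u, \mathbf A - \mathbf{I_3}, \mathbf F - \mathbf{I_3})$ is supported in $B(0, R+\sigma t)$, which justifies every integration by parts below.

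First I would compute $W'(t)$ from the conservation form of \eqref{4}$_2$, namely $(\rho\mathbf u)_t + \nabla\cdot(\rho \mathbf u\otimes \mathbf u) + \nabla p_0 = \nabla\cdot(\rho G(\mathbf F\mathbf A\mathbf F^T - \mathbf{I_3}))$. Multiplying by $\mathbf x$, using $\nabla\cdot\mathbf x = 3$, and noting that both $p_0(\rho) - p_0(1)$ and $\mathbf T = \rho G(\mathbf F\mathbf A\mathbf F^T-\mathbf{I_3})$ are compactly supported, I obtain
\[
W'(t) = \int_{\mathbb R^3}\rho|\mathbf u|^2\,\dif\mathbf x + 3\int_{\mathbb R^3}(p_0(\rho)-p_0(1))\,\dif\mathbf x - \int_{\mathbb R^3}\tr\mathbf T\,\dif\mathbf x.
\]
By convexity of $p_0(\rho)=a\rho^\gamma$ ($\gamma>1$) and Assumption \ref{assongnull}.2, the pressure term satisfies $3\int(p_0(\rho)-p_0(1))\,\dif\mathbf x \ge 3a\gamma\, m_0 \ge 0$ and may be dropped from the lower bound.

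Second, I would set up the key energy identity \eqref{5}. Testing \eqref{4}$_2$ with $\mathbf u$, using \eqref{4}$_1$, and combining with the trace of \eqref{4}$_3$ together with \eqref{4}$_4$, produces a nonnegative free-energy density $\eta$ of the form $\tfrac12\rho|\mathbf u|^2 + \tfrac{a}{\gamma-1}(\rho^\gamma-\gamma\rho+\gamma-1) + \tfrac12\rho G(\tr(\mathbf F\mathbf A\mathbf F^T) - \ln\det(\mathbf F\mathbf A\mathbf F^T) - 3)$ satisfying $\partial_t\eta + \nabla\cdot(\cdot) = -\mathcal D \le 0$ due to the relaxation term in \eqref{4}$_3$. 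After integration this yields $\int\eta(t)\,\dif\mathbf x \le H_0$, where $H_0$ is the initial free energy of the statement. Since $\rho\det\mathbf F$ is a conserved quantity (compare with the hypothesis of \cite{BY21}) and Assumption \ref{assongnull}.3 guarantees $\tr\mathbf T_0\ge 0$, the logarithmic piece can be estimated so that
\[
\int_{\mathbb R^3}\tr\mathbf T(\cdot,t)\,\dif\mathbf x \le H_0
\]
uniformly in $t$.

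Third, I apply Cauchy--Schwarz on $W(t) = \int_{|\mathbf x|\le R+\sigma t}\mathbf x\cdot\rho\mathbf u\,\dif\mathbf x$, using $\int\rho|\mathbf x|^2\,\dif\mathbf x \le (R+\sigma t)^2[m_0 + \tfrac{4\pi}{3}(R+\sigma t)^3]$, to derive
\[
\int_{\mathbb R^3}\rho|\mathbf u|^2\,\dif\mathbf x \ge \frac{W(t)^2}{(R+\sigma t)^2\bigl[m_0+\tfrac{4\pi}{3}(R+\sigma t)^3\bigr]}.
\]
Combining with the previous two steps gives the Riccati-type inequality
\[
W'(t) \ge \frac{W(t)^2}{(R+\sigma t)^2\bigl[m_0+\tfrac{4\pi}{3}(R+\sigma t)^3\bigr]} - \frac{H_0}{\lambda}.
\]
Finally, the quantitative condition \eqref{11} is designed so that $W(0)$ already exceeds the threshold at which the quadratic term in this ODE dominates: the term $\lambda H_0$ absorbs the linear defect, $\lambda\max\rho_0\|\mathbf u_0\|_{L^2}^2$ controls the initial kinetic contribution propagated by Cauchy--Schwarz, and $\tfrac{16 R^4\sigma\pi\max\rho_0}{3}$ accounts for the growth of the volume factor. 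A standard comparison argument then shows $W(t)\to+\infty$ at some finite $T^*$, contradicting $C^1$ regularity and forcing the life span to be finite.

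The main obstacle is handling $\int\tr\mathbf T\,\dif\mathbf x$: unlike in the pure Euler case of \cite{S85}, the polymeric stress contributes \emph{negatively} to $W'(t)$, so the energy identity must be strong enough to provide the time-uniform bound above, and the constants in \eqref{11} must be calibrated against this bound together with the Cauchy--Schwarz volume factor; it is precisely to make this calibration work that the structural Assumption \ref{assongnull}.3 on $\tr\mathbf T_0$ and the positivity of $m_0$ are imposed.
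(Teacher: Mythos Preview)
Your overall plan---compute $W'$, drop the pressure term via $m_0\ge 0$, control the stress contribution through an energy identity, then run a Riccati/comparison argument---matches the paper's strategy, but the step that actually makes the proof work is treated differently in the paper, and your version has a genuine gap.

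The gap is in your Step~2, the control of $\int_{\mathbb R^3}\tr\mathbf T$. You propose an entropy density containing $-\tfrac12\rho G\ln\det(\mathbf F\mathbf A\mathbf F^T)$ with sign-definite dissipation and then assert a \emph{pointwise-in-time} bound $\int\tr\mathbf T(\cdot,t)\,\dif\mathbf x\le H_0$. This is not substantiated: even granting your energy inequality, it bounds $\int\rho G\bigl(\tr(\mathbf{FAF}^T)-\ln\det(\mathbf{FAF}^T)-3\bigr)$, not $\int\tr\mathbf T=\int\rho G\bigl(\tr(\mathbf{FAF}^T)-3\bigr)$, and the discrepancy $\int\rho G\ln\det(\mathbf{FAF}^T)$ is not controlled from above by your remark on $\rho\det\mathbf F$ (it still leaves $\ln\det\mathbf A$ open). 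The paper takes a different route: it proves the explicit identity \eqref{5}, whose ``dissipation'' term is exactly $\tfrac1\lambda\int\tfrac12\tr\mathbf T$---a quantity with \emph{no a priori sign}---so no pointwise bound on $\int\tr\mathbf T$ is available. What Lemma~\ref{le2} extracts is only the \emph{time-integrated} estimate
\[
\int_0^t\!\!\int_{\mathbb R^3}\tr\mathbf T\,\dif\mathbf x\,\dif s\ \le\ \lambda\bigl(H_0+\max\rho_0\,\|\mathbf u_0\|_{L^2}^2\bigr),
\]
obtained via an integrating-factor argument from $g'(t)+\lambda^{-1}g(t)\le\text{const}$ with $g(t)=\int_0^t\!\int\tr\mathbf T$. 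That is exactly where the factor $\lambda$ in hypothesis \eqref{11} originates; a pointwise bound of your type would not produce it, so your outline does not recover the stated condition. (Your displayed Riccati inequality also carries a stray $1/\lambda$ inconsistent with your own Step~2.)

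This forces a change in the ODE step as well: one cannot remain at the level of $W'(t)$. The paper integrates once to obtain $W(t)\ge U_0+\int_0^t c_3(1+c_2 s)^{-5}W^2(s)\,\dif s$ with $U_0=W(0)-\lambda(H_0+\max\rho_0\|\mathbf u_0\|_{L^2}^2)$, and then compares with $V'=c_3(1+c_2 t)^{-5}V^2$; since $(1+c_2 t)^{-5}$ is integrable at infinity, blowup follows only under the threshold $U_0>4c_2/c_3=\tfrac{16}{3}\pi\sigma R^4\max\rho_0$, which is precisely \eqref{11}. Finally, the theorem asserts the \emph{existence} of admissible data, and roughly a third of the paper's proof is an explicit radial construction of $\mathbf u_0$ (with two free parameters $L$ and $R$) verifying \eqref{11}; your proposal omits this part entirely.
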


\section{Blow up of solutions}

We first present the following two lemmas.

\begin{lemma} \label{le1}
Let $(\rho, \mathbf{u}, \mathbf{A}, \mathbf{F})$ be local solutions to system \eqref{4} and $\mathbf{T} = \rho G(\mathbf{F} \mathbf{A} \mathbf{F}^T - \mathbf {I_3})$ satisfies \eqref{1}$_3$. Then, the following identity holds
\begin{multline}
\frac{\dif}{\dif t} \int_{\mathbb{R}^3} \left( \frac{\rho }{2} |\mathbf{u}|^2
+ \frac{a}{\gamma-1} (\rho^\gamma - 1 - \gamma(\rho - 1)) + \frac{\mu_0}{\lambda} (\rho \ln \rho - \rho + 1)
+ \frac{\tr (\mathbf{T})}{2} \right) \dif \mathbf x
\\
+ \frac{1}{\lambda} \int_{\mathbb{R}^3} \frac{\tr (\mathbf{T})}{2} \dif \mathbf x
= 0.
\label{5}
\end{multline}
\end{lemma}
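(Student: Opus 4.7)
The plan is to combine the mechanical energy balance for the mass--momentum block of \eqref{1} with the trace of the Oldroyd-type stress equation \eqref{1}$_3$, with weights chosen so that the cross term $\int \mathbf{D}\mathbf{u}:\mathbf{T}\,\dif\mathbf{x}$ cancels. The core structural point is that the source $\mu_0\rho(\nabla\cdot\mathbf{u})$ produced when one takes the trace of \eqref{1}$_3$ is, after integration, an exact time derivative --- precisely of the entropy-like functional $\int(\rho\ln\rho-\rho+1)\,\dif\mathbf{x}$.

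\textbf{Step 1 (mechanical energy).} Testing \eqref{1}$_2$ with $\mathbf{u}$ and using \eqref{1}$_1$ through the standard identity $\int\rho\,(D/Dt)f\,\dif\mathbf{x}=(\dif/\dif t)\!\int\rho f\,\dif\mathbf{x}$ yields
\begin{equation*}
\frac{\dif}{\dif t}\int_{\mathbb{R}^3}\frac{\rho|\mathbf{u}|^2}{2}\,\dif\mathbf{x} + \int_{\mathbb{R}^3}\mathbf{u}\cdot\nabla p_0\,\dif\mathbf{x} = \int_{\mathbb{R}^3}\mathbf{u}\cdot\di\mathbf{T}\,\dif\mathbf{x}.
\end{equation*}
The pressure term rewrites, via the $\gamma$-law and \eqref{1}$_1$, as $(\dif/\dif t)\!\int\frac{a}{\gamma-1}\rho^\gamma\,\dif\mathbf{x}$; since $\int(\rho-1)\,\dif\mathbf{x}$ is constant in time (compact support from Proposition~\ref{prop1} together with conservation of mass), I may subtract a linear function of $(\rho-1)$ under the time derivative to obtain the nonnegative potential $\frac{a}{\gamma-1}(\rho^\gamma-1-\gamma(\rho-1))$ displayed in \eqref{5}. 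Integration by parts on the right, together with symmetry of $\mathbf{T}=\rho G(\mathbf{F}\mathbf{A}\mathbf{F}^T-\mathbf{I_3})$, gives $\int\mathbf{u}\cdot\di\mathbf{T}\,\dif\mathbf{x}=-\int\mathbf{D}\mathbf{u}:\mathbf{T}\,\dif\mathbf{x}$.

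\textbf{Step 2 (trace of the stress equation).} Taking the trace of \eqref{1}$_3$ and using that $\tr(\nabla\mathbf{u}\cdot\mathbf{T})=\tr(\mathbf{T}\cdot(\nabla\mathbf{u})^T)=\nabla\mathbf{u}:\mathbf{T}$ (by symmetry of $\mathbf{T}$), $\tr(\mathbf{D}\mathbf{u})=\nabla\cdot\mathbf{u}$, and the transport identity $\partial_t\tr\mathbf{T}+\mathbf{u}\cdot\nabla\tr\mathbf{T}+(\nabla\cdot\mathbf{u})\tr\mathbf{T}=\partial_t\tr\mathbf{T}+\di(\tr\mathbf{T}\,\mathbf{u})$, integrating over $\mathbb{R}^3$ and dividing by $2\lambda$ produces
\begin{equation*}
\frac{\dif}{\dif t}\!\int\!\frac{\tr\mathbf{T}}{2}\,\dif\mathbf{x} - \int\!\mathbf{D}\mathbf{u}:\mathbf{T}\,\dif\mathbf{x} + \frac{1}{\lambda}\!\int\!\frac{\tr\mathbf{T}}{2}\,\dif\mathbf{x} = \frac{\mu_0}{\lambda}\!\int\!\rho\,\nabla\cdot\mathbf{u}\,\dif\mathbf{x}.
\end{equation*}
From \eqref{1}$_1$ one computes $(\dif/\dif t)\!\int\rho\ln\rho\,\dif\mathbf{x}=-\!\int\rho\,\nabla\cdot\mathbf{u}\,\dif\mathbf{x}$, so the right-hand side equals $-\frac{\mu_0}{\lambda}(\dif/\dif t)\!\int(\rho\ln\rho-\rho+1)\,\dif\mathbf{x}$ after using mass conservation to absorb the $-\rho+1$ term freely.

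Adding the identity from Step 1 to the one just obtained in Step 2 makes the two $\int\mathbf{D}\mathbf{u}:\mathbf{T}\,\dif\mathbf{x}$ contributions cancel, leaving exactly \eqref{5}. I anticipate no technical difficulty: the computation is purely structural, and all integrations by parts are legitimate because Proposition~\ref{prop1} keeps the perturbations compactly supported on $[0,T]$. The only point one has to spot is the identification, modulo the conserved mass, of $\mu_0\rho(\nabla\cdot\mathbf{u})$ as a time derivative of the entropy-like combination $\rho\ln\rho-\rho+1$; once this is seen, the identity assembles itself.
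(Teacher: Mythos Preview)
Your proof is correct and follows essentially the same route as the paper: multiply the momentum equation by $\mathbf{u}$ to get the kinetic/pressure energy balance, take the trace of the stress equation \eqref{1}$_3$, and add so that the $\int \mathbf{D}\mathbf{u}:\mathbf{T}\,\dif\mathbf{x}$ terms cancel. Your handling of the pressure term, the identification $\int\rho\,\nabla\cdot\mathbf{u}\,\dif\mathbf{x}=-(\dif/\dif t)\!\int\rho\ln\rho\,\dif\mathbf{x}$, and the use of mass conservation to insert the affine corrections $-1-\gamma(\rho-1)$ and $-\rho+1$ are exactly what the paper does (though the paper leaves these last points more implicit).
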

\begin{proof}
Multiplying \eqref{4}$_2$ by $\mathbf{u}$, using the mass equation \eqref{4}$_1$, and integrating the result, we get
after integration by part (recall the solution $\mathbf{u}$ has finite support):
\begin{equation*}
\frac{\dif}{\dif t} \int_{\mathbb{R}^3} \left( \frac{\rho}{2} |\mathbf{u}|^2 
+ \tr (\mathbf{T} (\nabla \mathbf{u})^T)\right) \dif \mathbf x
= \int_{\mathbb{R}^3}  p \nabla\cdot\mathbf{u}  \dif \mathbf x.
\end{equation*}
Taking the trace of equation \eqref{1}$_3$, using the mass equation \eqref{4}$_1$ and integrating the result, one has:
\begin{align*}
\frac{\dif}{\dif t} \int_{\mathbb{R}^3} \frac{\tr (\mathbf{T})}{2} \dif \mathbf x - \int_{\mathbb{R}^3} \tr (\mathbf{T} (\nabla \mathbf{u})^T) \dif \mathbf x
+ \frac1\lambda \int_{\mathbb{R}^3} \frac{\tr (\mathbf{T})}{2}   \dif \mathbf x
= \int_{\mathbb{R}^3} \frac{\mu_0 \rho}{\lambda} \nabla\cdot \mathbf{u} \dif \mathbf x.
\end{align*}
Combining the above estimates, and using the mass equation again
(with the fact that the solution $\mathbf{u}$ has finite support)
we obtain the desired result.
\end{proof}

\begin{lemma}\label{le2}
For $0 \le t \le T$, we have
\begin{align}
m(t) = \int_{\mathbb{R}^3} (\rho(\mathbf x,t) - \bar \rho) \dif \mathbf x &\ge 0, \label{7} \\
\int_{\mathbb{R}^3} (p(\rho) - p(\bar \rho)) \dif \mathbf x &\ge 0, \label{8} \\
\int_0^t \int_{\mathbb{R}^3} \tr (\mathbf{T}) \dif \mathbf x \dif t &\le
\lambda \left( H_0 + {\max \rho_0} \|\mathbf{u}_0\|_{L^2}^2\right),  \label{9}
\end{align}
where
\begin{align}\label{h0}
H_0 = \int_{\mathbb{R}^3} \left(
\frac{2a}{\gamma-1} (\rho_0^\gamma - 1 - \gamma(\rho_0 - 1)) + \frac{2\mu_0}{\lambda} (\rho_0 \ln \rho_0 - \rho_0 + 1)
+ \tr (\mathbf{T}_0) \right) \dif \mathbf x.
\end{align}
\end{lemma}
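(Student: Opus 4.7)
The plan is to handle the three bounds in order: \eqref{7} by conservation of mass, \eqref{8} by a Jensen-type inequality, and \eqref{9} by time-integrating the identity \eqref{5}. All three rely on the finite propagation property of Proposition \ref{prop1}, which confines the perturbations to the ball $B(t) := \{|\mathbf{x}| \le R + \sigma t\}$.

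For \eqref{7}, I would integrate the continuity equation $\rho_t + \nabla\cdot(\rho\mathbf u)=0$ over $\mathbb R^3$; compact support of $\mathbf u$ makes $\int \nabla\cdot(\rho\mathbf u)\dif\mathbf x = 0$, so $m(t) \equiv m_0 \ge 0$. For \eqref{8}, the integral reduces to one over the finite-volume ball $B(t)$ since $\rho-\bar\rho$ vanishes outside; as $p(\rho) = a\rho^\gamma$ is convex and increasing for $\gamma > 1$, Jensen's inequality combined with \eqref{7} gives
\[
\frac{1}{|B(t)|}\int_{B(t)} p(\rho)\dif\mathbf x \;\ge\; p\Bigl(\tfrac{1}{|B(t)|}\int_{B(t)}\rho\dif\mathbf x\Bigr) \;\ge\; p(\bar\rho),
\]
which rearranges to \eqref{8}.

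For \eqref{9}, I would introduce $\Phi(s)$ as the $\mathbb R^3$-integral of the bracketed expression on the left side of \eqref{5}, so that \eqref{5} reads $\Phi'(s) + \tfrac{1}{2\lambda}\int_{\mathbb R^3}\tr(\mathbf T)(s,\cdot)\dif\mathbf x = 0$. Integrating over $[0,t]$ yields
\[
\frac{1}{2\lambda}\int_0^t\int_{\mathbb R^3}\tr(\mathbf T)\dif\mathbf x\,\dif s \;=\; \Phi(0) - \Phi(t) \;=\; \tfrac12\int_{\mathbb R^3}\rho_0|\mathbf u_0|^2\dif\mathbf x + \tfrac12 H_0 - \Phi(t).
\]
The first three summands inside $\Phi$ are each of the form $\phi(\rho) - \phi(\bar\rho) - \phi'(\bar\rho)(\rho-\bar\rho)$ for strictly convex $\phi$ (namely $\tfrac{a}{\gamma-1}\rho^\gamma$ and $\tfrac{\mu_0}{\lambda}\rho\ln\rho$), hence are pointwise non-negative; combined with $\tr(\mathbf T)/2 \ge 0$ (see next paragraph) this forces $\Phi(t) \ge 0$. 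Bounding $\int\rho_0|\mathbf u_0|^2 \le \max\rho_0\,\|\mathbf u_0\|_{L^2}^2$ then gives \eqref{9}.

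The main technical obstacle is justifying $\tr(\mathbf T(t,\mathbf x)) \ge 0$ for $t > 0$: the evolution of $\tr(\mathbf T)$ inherited from $\eqref{1}_3$ contains sign-indefinite terms $\tr(\nabla\mathbf u\cdot\mathbf T)$ and $(\nabla\cdot\mathbf u)\tr(\mathbf T)$, so pointwise positivity does not propagate by a direct maximum principle. The cleanest route is through the decomposition $\mathbf T = \rho G(\mathbf F\mathbf A\mathbf F^T - \mathbf{I_3})$ together with the preservation of the SPD character of $\mathbf A$ along trajectories (as in \cite{BY21}) and the mass--deformation constraint $\rho\det\mathbf F\equiv 1$, which reduces the claim to $\tr(\mathbf F\mathbf A\mathbf F^T) \ge 3$; this inequality is then propagated from \eqref{ass2}. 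Modulo this point, the lemma is a routine concatenation of the energy identity with Jensen's inequality.
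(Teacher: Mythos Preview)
Your treatment of \eqref{7} and \eqref{8} is correct and matches the paper's argument essentially verbatim.

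For \eqref{9}, however, there is a genuine gap --- and you yourself flag it. Your approach drops \emph{all} of $\Phi(t)$, which requires $\int_{\mathbb R^3}\tr(\mathbf T)(t)\,\dif\mathbf x\ge 0$. The sketch you offer (SPD of $\mathbf A$, the constraint $\rho\det\mathbf F\equiv 1$, and ``propagation of $\tr(\mathbf F\mathbf A\mathbf F^T)\ge 3$ from \eqref{ass2}'') does not close: via AM--GM one would need $\det\mathbf A\,(\det\mathbf F)^2\ge 1$, i.e.\ $\det\mathbf A\ge \rho^2$ along trajectories, and the transport equation $\eqref{4}_3$ for $\mathbf A$ gives no such bound in general. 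So the positivity of $\tr(\mathbf T)$ is not available, and your proof of \eqref{9} is incomplete as written.

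The paper avoids this difficulty entirely. Instead of dropping $\int\tr(\mathbf T)/2$ from $\Phi(t)$, it drops only the three manifestly non-negative pieces (kinetic energy and the two convex relative-entropy terms) and \emph{keeps} the trace term on the left. This yields, with $Y(t):=\int_0^t\int_{\mathbb R^3}\tr(\mathbf T)\,\dif\mathbf x\,\dif s$ and $C:=H_0+\max\rho_0\|\mathbf u_0\|_{L^2}^2$,
\[
Y'(t)+\tfrac{1}{\lambda}Y(t)\le C,\qquad Y(0)=0.
\]
An integrating factor (i.e.\ Gr\"onwall) gives $Y(t)\le \lambda C\bigl(1-e^{-t/\lambda}\bigr)\le \lambda C$, which is exactly \eqref{9}. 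No sign information on $\tr(\mathbf T)$ is needed at any step. The fix to your argument is therefore not to justify the positivity claim, but simply not to throw the trace term away.
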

\begin{proof}
Integrating the mass equation \eqref{1}$_1$ over $\mathbb{R}^3$, using Proposition \ref{prop1} and Assumption \ref{assongnull}, we immediately get
$$
m(t) = \int_{\mathbb{R}^3} (\rho(x,t) - \bar \rho) \dif \mathbf x = \int_{\mathbb{R}^3} (\rho_0 - \bar \rho) \dif \mathbf x \ge 0.
$$

Moreover, by Jensen's inequality and \eqref{7}, we have
\begin{align*}
\int_{B(t)} p(\rho) \dif \mathbf x &= a\int_{B(t)} \rho^\gamma (x,t)\dif \mathbf x\\
&\ge a |B(t)|^{1-\gamma} \left( \int_{B(t)} \rho(x,t) \dif \mathbf x \right)^\gamma\\
&\ge a |B(t)|^{1-\gamma} \left(\int_{B(t)} \bar \rho \dif \mathbf x \right)^\gamma\\
&= a |B(t)| \bar \rho^\gamma =a \int_{B(t)} \bar \rho^\gamma \dif \mathbf x=\int_{B(t)} p(\bar \rho) \dif \mathbf x.
\end{align*}
where $B(t)=\{x:|x|\le R+\sigma t\}=D^c(t)$ and $|B(t)|$ denotes the volume of $B(t)$. This implies \eqref{8}.   
On the other hand, by Taylor expansion, $\rho^\gamma-1-\gamma (\rho-1)\ge 0$ 
and $\rho \ln \rho-\rho+1\ge 0$ 
whatever $\rho\ge0$, and using Lemma \ref{le1} we have
\begin{align*}
\int_{\mathbb{R}^3} \tr (\mathbf{T}) \dif \mathbf x + \frac1\lambda\int_0^t \int_{\mathbb{R}^3} \tr (\mathbf{T}) \dif \mathbf x \dif t \le
H_0 + {\max \rho_0}\|\mathbf{u}_0\|_{L^2}^2
\end{align*}
after integrating \eqref{5} over $(0,t)$, i.e. for all times $t\ge 0$
$$
\frac{\dif}{\dif t} \left( e^{\frac{t}\lambda} \int_0^t \int_{\mathbb{R}^3} \tr (\mathbf{T}) \dif \mathbf x \dif t -
e^{\frac{t}\lambda}(H_0 + {\max \rho_0}\|\mathbf{u}_0\|_{L^2}^2)\lambda  \right) \le 0
$$
which immediately yields the inequality \eqref{9}. 
\end{proof}
 

{\bf Proof of Theorem \ref{th}}:
Using equation $\eqref{1}_2$ and integrating by part, we get
\begin{align*}
W^\prime(t)&=\int_{\mathbb R^3} \mathbf x \cdot (\rho \mathbf{u})_t\dif \mathbf x\\
&=\int_{\mathbb R^3} \mathbf x \cdot \left(-\nabla\cdot (\rho \mathbf{u}\otimes \mathbf{u})-\nabla p+\nabla \cdot \mathbf{T}\right)\dif \mathbf x\\
&=\int_{\mathbb R^3} \left[\rho |\mathbf{u}|^2+3(p(\rho)-p(\bar \rho))-\mathrm{tr} (\mathbf{T})\right]\dif \mathbf x.
\end{align*}
Therefore, Lemma \ref{le2} implies
\begin{align}
W^\prime(t)\ge \int_{\mathbb R^3} \rho |\mathbf{u}|^2 \dif \mathbf x- \int_{\mathbb R^3} \mathrm{tr} (\mathbf{T})\dif \mathbf x.
\end{align}
On the other hand, using Cauchy-Schwarz inequality, we get
\begin{align*}
W^2(t)=\left( \int_{B(t)} \mathbf x \cdot (\rho \mathbf{u}) \dif \mathbf x \right)^2
\le\left( \int_{B(t)} |\mathbf x|^2 \rho \dif \mathbf x \right) \cdot \left(\int_{B(t)} \rho |\mathbf{u}|^2 \dif \mathbf x \right),
\end{align*}
while, in virtue of $\eqref{7}$, we have
\begin{align*}
\int_{B(t)} |\mathbf x|^2 \rho \dif \mathbf x &\le (R+\sigma t)^2 \int_{B(t)} \rho \dif \mathbf x \\
&=(R+\sigma t)^2 \left( m(0)+\int_{B(t)} \bar \rho \dif \mathbf x \right)\\
&=(R+\sigma t)^2 \left( \int_{B(t)} (\rho_0-\bar \rho) \dif \mathbf x +\int_{B(t)} \bar \rho \dif \mathbf x \right)\\
&\le \frac{4}{3}\pi  (R+\sigma t)^5 \max \rho_0.
\end{align*}
Combining the above inequalities, we get
\begin{align}\label{12}
W^\prime(t)\ge \left( \frac{4}{3}\pi (R+\sigma t)^5 \max \rho_0 \right)^{-1} W^2(t) - \int_{\mathbb R^3} \mathrm{tr} (\mathbf{T})\dif \mathbf x.
\end{align}

Let
$$
c_2:=\frac{\sigma}{R},\qquad c_3:= \frac{3}{4 \pi \max \rho_0 R^5}.
$$
Then, integrating \eqref{12} over $(0,t)$ and using \eqref{9}, we derive that
\begin{align}
W(t)\ge \int_0^t \frac{c_3}{(1+c_2 s)^5} W^2(s) \dif s +U_0,
\end{align}
where  $U_0=W(0)-\lambda\left( H_0+
\max \rho_0 \|\mathbf{u}_0\|_{L^2}^2\right)$.
Note that,  assumption \eqref{11} is equivalent to
\begin{align}\label{14}
U_0>\frac{4c_2}{c_3}=\frac{16 R^4 \sigma \pi \max \rho_0}{3}>0.
\end{align}

Let
$$
V(t)=\int_0^t \frac{c_3}{(1+c_2 s)^5} W^2(s)\dif s+U_0,
$$
then,
\begin{align}
\frac{\dif}{\dif t} V(t)=\frac{c_3}{(1+c_2 t)^5} W^2(t)\ge \frac{c_3}{(1+c_2 t)^5} V^2(t),
\end{align}
for which we have
\begin{align}
\frac{1}{U_0}=\frac{1}{V(0)}\ge \frac{1}{V(0)}-\frac{1}{V(T)} \ge \frac{c_3}{4c_2}-\frac{c_3}{4 c_2(1+c_2 T)^4},
\end{align}
which means that $T$ cannot be arbitrarily large without contradicting \eqref{14}.

Thus, the proof will be finished if we can show there exists $\mathbf{u}_0$ such that \eqref{14} (or equivalently, \eqref{11}) holds and Assumption \ref{assongnull} is satisfied. Let (cp. \cite{HuRa023,HuRaWa022})
\begin{align} \label{5.12}
\tilde v(r)=
\begin{cases}
L \cos\left(\frac{\pi}{2}(r-1)\right), & r\in [0,1],\\
L, & r\in(1, R-1],\\
\frac{L}{2} \cos ( \pi(r-R+1))+\frac{L}{2}, & r\in (R-1, R],\\
0, & r\in(R, +\infty),
\end{cases}
\end{align}
where $L$ is a positive constant to be determined later. $\tilde v$ is not in $H^3(\mathbb{R}_+)$, but we can think of $\tilde v$ being smoothed around the singular points $r=1, R-1, R$ and put to zero around $r=0$, yielding a function $v$, with $v(r)\ge L$ when $r\in (2,R-2)$
and $\|v\|_{L^2}\leq 2 \|\tilde v\|_{L^2}$. We choose
$$
\mathbf{u}_0(\mathbf x):=v(|\mathbf x|)\frac{\mathbf x}{|\mathbf x|}.
$$

Inequalities \eqref{ass1} and \eqref{ass2} can easily be satisfied by choosing $\rho_0 > \bar \rho =1$ and $\mathbf A_0$ such that $\mathrm{tr}( \mathbf F_0 \mathbf A_0 \mathbf F_0^T-\mathbf {I_3})\ge 0$ with $\mathbf F_0=\mathbf{I_3}$. Then, letting $R\ge 5$ such that $(R-2)^4-2^4>R^4/32$ e.g., it holds 
\begin{align*}
W(0)\equiv \int_{\mathbb R^3} \mathbf x \cdot \mathbf{u}_0(\mathbf x) \rho_0(\mathbf x) \dif \mathbf x &=\int_{\mathbb R^3} \rho_0(\mathbf x) v(|\mathbf x|) |\mathbf x|\dif \mathbf x \\
&\ge \min \rho_0 \int_{B_0(R)} v(|\mathbf x|) |\mathbf x|\dif \mathbf x \\
&\ge \min \rho_0 \int_0^R v(r) r\cdot 4\pi r^2 \dif r \\
&\ge \min \rho_0 \int_2^{R-2} L\cdot 4\pi r^3 \dif r \ge \frac{\pi \min \rho_0}{32} L R^4.
\end{align*}
We choose $L$ (independent of $R$) sufficiently large such that
\begin{align}\label{19}
\frac{\pi \min \rho_0}{64} L \ge \frac{16 \sigma \pi \max \rho_0}{3}.
\end{align}

Now, after having chosen $L$ large enough, fix $L$. On the other hand, since $\|\mathbf{u}_0\|_{L^2}^2 \le 4 L^2 \cdot \frac{4\pi}{3}R^3$, we choose $R$ sufficiently large such that
\begin{align}\label{20}
\frac{\pi \min \rho_0}{64} L R^4 \ge \lambda\left( H_0+ \max \rho_0 4 L^2 \cdot \frac{4\pi}{3}R^3\right).
\end{align}
Combining \eqref{19} and \eqref{20}, the assumption \eqref{14} is satisfied.
This finishes the proof of Theorem \ref{th}.\hfill $\Box$.


\begin{thebibliography}{aaaaa}
 
 \bibitem{BE94} Beris, A.N. and Edwards, B.J. (1994) Thermodynamics of Flowing Systems. Oxford University Press, New York.

\bibitem{BP12} Bollada, P.C. and Phillips T.N. (2012) On the mathematical modelling of a compressible  viscoelastic fluid. Arch. Rational Mech. Anal. 205:1-26.

\bibitem{BY21} Boyaval, S (2021) Viscoelastic flows of Maxwell fluids with conservation laws. ESAIM: M2AN 55:807-831

\bibitem{NBH24} Wang, Na, Boyaval, S{\'e}bastien and Hu, Yuxi (2024) Global solutions and uniform convergence stability for compressible Navier-Stokes equations with oldroyd-type constitutive law, preprint, hal-04619590, ARXIV:2406.14943

\bibitem{CM01} Chemin, J. -Y. and Masmoudi N. (2001) About lifespan of regular solutions of equations related to viscoelastic fluids, SIAM J. Math. Anal., 33: 84-112.
 

 \bibitem{EB90} Edwards, B.J. and Beris, A.N. (1990) Remarks concerning compressible viscoelastic fluid models. J. Non-Newtonian Fluid Mech. 36: 411-417.

\bibitem{GuSa90} Guillop\'e C, Saut JC (1990) Existence resuls for the flow of viscoelastic fluids with a differential constitutive law. Nonlinear Analysis, TMA 15:849-869

 
\bibitem{HuRa023} Hu Y, Racke R (2023) Global existence versus blow-up for multidimensional hyperbolized compressible Navier-Stokes equations. SIAM J. Math. Anal. 55:4788-4815

\bibitem{HuRa024} Hu Y, Racke R (2024) Blow-up of solutions for relaxed compressible Navier-Stokes equations. J. Hyper. Diff. Equ. 21:129-141

\bibitem{HuRaWa022}  Hu, Y, Racke R, Wang N (2022) Formation of singularities for one-dimensional relaxed compressible Navier-Stokes equations. J. Differential Equations 327:145-165

 \bibitem{LLZ05} Liu C, Lin F.H. and Zhang, P. (2005)  On hydrodynamics of viscoelastic fluids 58: 1437-1471.
  

 

 \bibitem{OL50} Oldroyd, J.G. (1950) On the formulation of rheological equations of state. Proc.R.Soc.Lond. A 200: 523-541.
 
 \bibitem{PA24} Pan X.H. (2024) Globally analytical solutions of the compressible Oldroyd-B model without retardation. SIAM J.Math. Anal. 56: 4854-4869.
 
  \bibitem{Ra015} Racke R (2015) Lectures on Nonlinear Evolution Equations. Initial Value Problems. 2nd edition, Birkh\"auser, Basel

 
\bibitem{S85} Sideris TC (1985) Formation of singularities in three-dimensional
compressible fluids. Commun. Math. Phys. 101:475-485

\bibitem{TR69} Truesdell, C. (1969) Continuum Mechanics I: The Mechanical Foundations of Elasticity and Fluid Systems. International Science Review Series, Gordon and Breach, Inc, New
York.
 


\end{thebibliography}
\end{document}